\documentclass[a4paper,12pt,reqno]{amsart} 

\usepackage{amssymb,latexsym}
\usepackage{amsmath,amsthm}
\usepackage{enumerate}
\usepackage[mathscr]{eucal}
\usepackage{subcaption}
\usepackage{wrapfig}
\usepackage{graphicx}
\usepackage{multicol}
\usepackage{url}

\usepackage{graphics} 

\setlength{\textwidth}{5in}
\setlength{\textheight}{7.7in}

\setcounter{page}{1}

\theoremstyle{plain}
\newtheorem{theorem}{\indent Theorem}[section]

\newtheorem{lemma}[theorem]{\indent Lemma}
\theoremstyle{definition}
\newtheorem{definition}[theorem]{\indent Definition}
\newtheorem{example}[theorem]{\indent Example}
\theoremstyle{remark}
\newtheorem{remark}[theorem]{\indent Remark}

\allowdisplaybreaks


\newcommand{\sfour}{\textup{$\mathbf{S4}$}\thinspace}
\newcommand{\cmtf}{\textup{$\mathbf{CMT4}$}\thinspace}

\newcommand{\maxx}{\textup{\emph{M}\thinspace}}
\newcommand{\tax}{\textup{\emph{T}\thinspace}}
\newcommand{\cax}{\textup{\emph{C}\thinspace}}
\newcommand{\kax}{\textup{\emph{K}\thinspace}}
\newcommand{\four}{\textup{\emph{4}\thinspace}}
\newcommand{\nax}{\textup{\emph{N}\thinspace}}
\newcommand{\cpc}{\textup{$\mathbf{CPC}$}\thinspace}

\newcommand{\nec}{\textup{\emph{RN}\thinspace}}
\newcommand{\monot}{\textup{\emph{RM}\thinspace}}
\newcommand{\mpon}{\textup{\emph{MP}\thinspace}}
\newcommand{\rext}{\textup{\emph{RE}\thinspace}}

\newcommand{\gitop}{\textup{\textbf{gITop}\thinspace}}
\newcommand{\gitlog}{\textup{\textbf{gITLog}\thinspace}}

\begin{document}
\null
\vskip 1.8truecm

\title[Infra-topologies revisited: logic and clarification of basic notions]
{Infra-topologies revisited: \\ logic and clarification of basic notions} 
\author{Tomasz Witczak}    
\address{Institute of Mathematics\\ Faculty of Science and Technology
\\ University of Silesia\\ Bankowa~14\\ 40-007 Katowice\\ Poland}
\email{tm.witczak@gmail.com} 


\begin{abstract}In this paper we adhere to the definition of \emph{infra-topological} space as it was introduced by Al-Odhari. Namely, we speak about families of subsets which contain $\emptyset$ and the whole universe $X$, being at the same time closed under finite intersections (but not necessarily under arbitrary or even finite unions). This slight modification allows us to distinguish between new classes of subsets (infra-open, ps-infra-open and i-genuine). Analogous notions are discussed in the language of closures. The class of minimal infra-open sets is studied too, as well as the idea of generalized infra-spaces. Finally, we obtain characterization of infra-spaces in terms of modal logic, using some of the notions introduced above.
\end{abstract}

\subjclass{Primary: 54A05 ; Secondary: 03B45}
\keywords{Generalized topological spaces, infra-topological spaces, modal logic}


\maketitle


\section{Introduction}

Generalizations of the ordinary definition of topological space are not new invention. Already in 1940s Choquet \cite{CHOQ} introduced \emph{pre-topologies}. Later, in 60s, Levine recognized several classes of sets which have weaker properties than standard open sets (see \cite{PIEK}). Among them were $\alpha$-, semi-, pre-, b- and $\beta$-open sets. In 80s Masshour \cite{MASS} defined \emph{supra-topological} spaces. In the next decade Cs\'{a}sz\'{a}r started systematic study of families closed only under arbitrary unions (see \cite{CSASZ}). This study has been continued by many authors from all over the world. \footnote{Actually, supra-topologies are Cs\'{a}sz\'{a}r's spaces with open universe $X$.} 

In fact, the whole research direction has flourished in the past two decades. Nowadays we have such concepts as reduced topology, peri-topology, minimal structure, weak structure and, finally, generalized weak structure. The last has been defined simply as an arbitrary family of subsets, see \cite{AVILA}. Almost each of these frameworks is equipped with notions analogous to those of continuity, convergence, filter, density, compactness, connectedness or even topological group. 

This line of research can be useful in classification of finite objects. This is, by no means, important in computer science and its applications (like data science, pattern recognition and image classification, see \cite{IRIS}). For example, generalized topologies (in the sense of Cs\'{a}sz\'{a}r)  appear in formal concept analysis and data clustering as \emph{extensional abstractions} (see \cite{SOLD}) or \emph{knowledge spaces}. Interestingly, they have also other applications: even in Banach games and the issue of entropy (see \cite{PAWLAK1}, \cite{PAWLAK2}). 

As for the infra-topological spaces, they have been introduced (under this name) by Al-Odhari in \cite{ODH}. Then they were investigated by the same author in \cite{CONT}. Later, there appeared papers by other authors too (e.g. \cite{DHANA}, \cite{VAIY}). It is worth to say that the notion of "infra-topology" was used also in \cite{CHAKR} but probably with a different meaning (these authors assumed to work "with a pair $(X,T)$ where $X$ is a non-empty set and $T$ is a family of subsets of $X$ which includes $X$"). 

Unfortunately, it seems that original definitions and theorems of Al-Odhari suffer from certain inexactness. Moreover, there are some flaws which should be corrected. This will be shown in the next sections. For this reason, we want to clarify basic notions of infra-topology. Our aim is to connect these spaces with formal logic by means of modal operators. In some sense, this part of our paper is crucial. In fact, we think that various generalizations of topological spaces (like those mentioned above) are still underestimated as semantical tools. The same can be told about topologies in "alternative" universes (e.g. intuitionistic topological spaces or n-ary topologies, see \cite{SEETHA}). 

\section{Openess and interior}

\subsection{Infra-open sets}

In general, the following definition of infra-topological space is taken from \cite{ODH}. However, there is a kind of confusion there. The author writes about "the intersection of the elements of \emph{any} subcollection of $\tau_{iX}$" and states that "any arbitrary intersection of infra-open set is infra-open set". On the other hand, his formal definition in terms of mathematical symbols clearly refers to the finite case. Thus, we propose to assume that infra-topology is basically closed under \emph{finite} intersections. 

\begin{definition}
Let $X$ be an arbitrary set. An \emph{infra-topological} space on $X$ is a collection $\tau_{iX}$ of subsets of $X$ such that:

\begin{enumerate}
\item $\emptyset, X \in \tau_{iX}$.
\item If for any $i$, $1 \leq i \leq n$, $A_i \in \tau_{iX}$, then $\bigcap A_{i} \in \tau_{iX}$. 
\end{enumerate}

If $A \in \tau_{iX}$, then we say that $A$ is an \emph{infra-open} set. 
\end{definition}

We think that it is reasonable to highlight infra-topologies closed under \emph{any} intersections by naming them \emph{Alexandrov} infra-topologies. Also we can speak about \emph{generalized} infra-topological spaces: those which may not contain $X$.

In general, infra-topologies can be identified with \emph{minimal structures} (see \cite{POPA}) closed under finite intersections. 

Below we list several examples of our spaces. First three are taken from \cite{ODH}, the rest is our own invention.

\begin{example}
\label{jeden}
The following spaces are infra-topological (and not topological):

\begin{enumerate}

\item $X = \{a, b, c\}$, $\tau_{iX} = \{\emptyset, X, \{a\}, \{b\}\}$. Note that $\{a\} \cup \{b\} = \{a, b\} \notin \tau_{iX}$. 

\item $X = \{a, b, c, d\}$, $\tau_{iX} = \{\emptyset, X, \{a\}, \{c\}, \{a, b\}, \{a, c\}\}$. Note that, for example, $\{a, b\} \cup \{a, c\} = \{a, b, c\} \notin \tau_{iX}$. 

\item $X = \{a, b, c, d\}$, $\tau_{iX} = \{\emptyset, X, \{c\}, \{d\}, \{b, c\}, \{c, d\}\}$.

\item $X = \{a, b, c, d\}$, $\tau_{iX} = \{\emptyset, X, \{a\}, \{b\}, \{c\}, \{a, b\}, \{a, b, c\}\}$. Note that, for example, $\{a\} \cup \{b\} = \{a, b\} \in \tau_{iX}$ but $\{b\} \cup \{c\} = \{b, c\} \notin \tau_{iX}$.

\item $X = \{a, b, c, d\}$, $\tau_{iX} = \{\emptyset, X, \{b\}, \{a, b\}, \{b, c\}\}$. 

\item $X = \mathbb{N}$ and we assume that the only infra-open sets are $\mathbb{N}$ and those finite subsets of $\mathbb{N}$ whose cardinality does not exceed certain constant $k \in \mathbb{N}$. Clearly, any intersection of such sets also has cardinality $\leq k$ (hence this is Alexandrov infra-topology), while we may easile find two sets whose union has cardinality greater than $k$. Moreover, we may assume that $\mathbb{N} \notin \tau_{iX}$ to obtain generalized version of this space.

\item $X = \mathbb{R}$ and we assume that the only infra-open sets are $\mathbb{R}$ and those subsets of $\mathbb{R}$ whose length (Lebesgue measure) does not exceed certain constant $y \in \mathbb{R}$. Again, we have Alexandrov infra-topology which can be easily transformed into the generalized one. 

\item $X = \mathbb{Z}$ and $\tau_{iX} = \{\emptyset, \mathbb{Z}, \mathbb{Z}^{-}, \mathbb{Z}^{+}\}$. Note that $\mathbb{Z}^{-} \cap \mathbb{Z}^{+} = \emptyset \in \tau_{iX}$ but $\mathbb{Z}^{-} \cup \mathbb{Z}^{+} = \mathbb{Z} \setminus \{0\} \notin \tau_{iX}$. Of course we can replace integers by rationals or reals in this example. Moreover, we can assume that $\{0\}$ is among infra-open sets.

\item $X = [-1, 1]$ and $\tau_{iX} = \{\emptyset, X, [-\frac{1}{4}, \frac{1}{4}], [0, \frac{1}{4}], [0, \frac{1}{2}]\}$.

\item $X = \mathbb{R}$ and we assume that the only infra-open sets are $\mathbb{R}$, $\emptyset$, $\{0\}$, intervals of the form $(-\infty, a]$, $a \leq 0$ and intervals of the form $[b, +\infty)$, $b \geq 0$. Clearly, the intersection of two such intervals has the proper form (it may be empty). On the other hand, their union may be beyond the family (take $(-\infty, -1] \cup [1, +\infty)$). Also we can replace $\leq$ and $\geq$ in the definition above by strict inequalities (in this case we may assume that $\{0\}$ is not infra-open).

\end{enumerate}
\end{example}

Th. 2.4. in \cite{ODH} is a little bit confusing. It states that the union of two infra-topological spaces (on the same universe $X$) \emph{is} an infra-topological space\footnote{This statement is repeated in \cite{CONT}.}. However, in the next remark we read that "the union of infra-topological spaces may not be infra-topological space, in general". There is even a proper counter-example. Indeed:

\begin{example}
\label{dwa}
Let $X = \{a, b, c, d\}$, $\tau_{iX} = \{\emptyset, X, \{a\}, \{c\}, \{a, b\}, \{a, c\}\}$ and $\mu_{iX} = \{\emptyset, X, \{c\}, \{d\}, \{b, c\}, \{c, d\}\}$. 

Then $\tau_{iX} \cup \mu_{iX} = \{\emptyset, X, \{a\}, \{c\}, \{d\}, \{a, b\}, \{a, c\}, \{b, c\}, \{c, d\}\}$ \footnote{Al-Odhari wrongly describes this set as $\{\emptyset, X, \{a\}, \{b\}, \{c\}, \{a, b\}, \{a, c\}, \{b, c\}, \{c, d\}\}$. It may be just a typo.}. 

Now we see that $\{a, b\} \cap \{b, c\} = \{b\} \notin \tau_{iX} \cup \mu_{iX}$. 
\end{example}

Finally, we can say:

\begin{theorem}
If $\tau_{iX}$ and $\mu_{iX}$ are two infra-topologies on the set $X$, then $\tau_{iX} \cap \mu_{iX}$ is still an infra-topology on $X$, while $\tau_{iX} \cup \mu_{iX}$ not necessarily. 
\end{theorem}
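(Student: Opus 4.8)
The plan is to treat the two assertions separately, verifying the two defining axioms of an infra-topology directly for $\tau_{iX} \cap \mu_{iX}$, and then to dispose of the union claim by appealing to a concrete counterexample already on hand.

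For the intersection, I would first check axiom (1). Since $\emptyset, X \in \tau_{iX}$ and $\emptyset, X \in \mu_{iX}$ by hypothesis, both $\emptyset$ and $X$ lie in $\tau_{iX} \cap \mu_{iX}$. Next I would check axiom (2): given any finite family $A_1, \dots, A_n$ with each $A_i \in \tau_{iX} \cap \mu_{iX}$, every $A_i$ belongs simultaneously to $\tau_{iX}$ and to $\mu_{iX}$. Applying closure under finite intersections within $\tau_{iX}$ gives $\bigcap_{i=1}^n A_i \in \tau_{iX}$, and the identical argument inside $\mu_{iX}$ gives $\bigcap_{i=1}^n A_i \in \mu_{iX}$; hence $\bigcap_{i=1}^n A_i \in \tau_{iX} \cap \mu_{iX}$. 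This completes the positive half.

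For the negative half, I would simply invoke Example~\ref{dwa}, where two explicit infra-topologies $\tau_{iX}$ and $\mu_{iX}$ on $X = \{a,b,c,d\}$ are exhibited together with the observation that $\{a,b\} \cap \{b,c\} = \{b\} \notin \tau_{iX} \cup \mu_{iX}$. Thus the union fails axiom (2), so it is not an infra-topology, which establishes the ``not necessarily'' clause.

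Honestly, there is no serious obstacle here: both defining conditions of an infra-topology (membership of $\emptyset, X$ and closure under finite intersections) are preserved under intersection for purely formal reasons, since a membership requirement that is closed under an operation in each of two families is automatically closed in their common part. The only genuine content lies in the union clause, and that content has already been supplied by the worked counterexample, so the proof reduces to a short routine verification followed by a citation of Example~\ref{dwa}.
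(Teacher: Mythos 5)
Your proof is correct and follows the same route as the paper: the paper states this theorem immediately after Example~\ref{dwa}, relying on that counterexample for the union clause and leaving the (routine) verification for the intersection implicit, which is exactly what you have spelled out.
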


\subsection{Infra-interiors}

Al-Odhari defined infra-interior of $A \subseteq X$ in a standard manner: as "the union of all infra-open sets contained in the set A". Moreover, he states later that this is "the biggest\footnote{He writes: "the smallest" but his intention is obvious.} infra-open set" (contained in $A$). But this is not necessarily true: as we already know, the union of infra-open sets may not be infra-open. Let us consider the following case: 

\begin{example}
\label{trzy}
Let $(X, \tau_{iX})$ be like in Ex. \ref{dwa}. Let us take $B = \{a, b, c\} \notin \tau_{iX}$. Clearly, $\{a\} \cup \{c\} \cup \{a, b\} \cup \{a, c\} = B$. However, $B$ is not the biggest infra-open set contained in itself because it is not infra-open at all. 
\end{example}

For the reasons above, we propose the following two definitions:

\begin{definition}
Let $(X, \tau_{iX})$ be an infra-topological space and $A \subseteq X$. Then we define \emph{infra-interior} (i-interior) of $A$ as: 

$iInt(A) = \bigcup\{O \subseteq X: O \in \tau_{iX}, O \subseteq A\}$. 
\end{definition}

\begin{definition}
Let $(X, \tau_{iX})$ be an infra-topological space and $A \subseteq X$. We say that $A$ is \emph{i-genuine} set \emph{iff} $iInt(A)$ is infra-open. The set of all i-genuine sets associated with a given infra-topology $\tau_{iX}$ on $X$ will be named $ig\tau_{iX}$.
\end{definition}

Let us go back to the Ex. \ref{trzy}. In this case, $B$ is not i-genuine. On the other hand, $\{a, b, d\}$ \emph{is} i-genuine because $\{a\} \cup \{a, b\} = \{a, b\} \in \tau_{iX}$. In Ex. \ref{jeden} (8) we have $\mathbb{Z}^{+} \cup \{0\}$ which is i-genuine (but not infra-open). The same can be told about $\mathbb{Z}^{-} \cup \{0\}$.

Not surprisingly, these two lemmas hold:

\begin{lemma}
Let $(X, \tau_{iX})$ be an infra-topological space and $A \subseteq X$ be i-genuine. Then $iInt(A)$ is the biggest (in the sense of inclusion) infra-open set contained in $A$. 
\end{lemma}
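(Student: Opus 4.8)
The plan is to verify directly the three properties that together say ``$iInt(A)$ is the biggest infra-open set contained in $A$'': namely that $iInt(A)$ is infra-open, that it is contained in $A$, and that it contains every infra-open subset of $A$. All three should follow by unwinding the definition $iInt(A) = \bigcup\{O \subseteq X : O \in \tau_{iX},\ O \subseteq A\}$ together with the genuineness hypothesis.

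First I would note that $iInt(A) \subseteq A$ immediately: it is a union of sets each of which is a subset of $A$, so the union is a subset of $A$ as well. Second, the assertion that $iInt(A)$ is infra-open is precisely the hypothesis that $A$ is i-genuine. This is the only nontrivial input, and it is exactly where the genuineness assumption does its work: without it, the defining union need not belong to $\tau_{iX}$, as Example \ref{trzy} demonstrates, and the phrase ``biggest infra-open set'' would be meaningless.

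Finally, for maximality I would take an arbitrary $O \in \tau_{iX}$ with $O \subseteq A$. By definition such an $O$ is one of the members of the family whose union defines $iInt(A)$, so $O \subseteq iInt(A)$. Combining this with the first two observations, $iInt(A)$ is an infra-open subset of $A$ that contains every infra-open subset of $A$, hence it is the largest such set with respect to inclusion. I do not expect any genuine obstacle here; the statement is essentially a repackaging of the definitions, and the only point requiring care is to invoke the i-genuine hypothesis explicitly to secure infra-openness of $iInt(A)$, since this is what fails in general.
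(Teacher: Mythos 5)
Your proof is correct and is exactly the routine unwinding of definitions that the paper intends (the paper states this lemma without proof, as one of two facts that hold "not surprisingly"): containment in $A$ and maximality come straight from the defining union, and infra-openness is precisely the i-genuine hypothesis. Nothing is missing.
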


\begin{lemma}
\label{cztery}
Let $(X, \tau_{iX})$ be an infra-topological space and $A, B \subseteq X$. Then:

\begin{enumerate} 
\item If $A$ is infra-open, then it is also i-genuine, i.e. $\tau_{iX} \subseteq ig\tau_{iX}$. 

\item If $A$ is infra-open, then $iInt(A) = A$. The converse is not true. 

\item $iInt(A \cap B) = iInt(A) \cap iInt(B)$. 

\item Each singleton is i-genuine.

\item If $iInt(A) = A$, then $A$ is open or is not i-genuine.
\end{enumerate} 
\end{lemma}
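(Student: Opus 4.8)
The plan is to verify the five items in turn, most of which reduce to routine set-theoretic manipulation of the definition $iInt(A) = \bigcup\{O \in \tau_{iX} : O \subseteq A\}$. For (1) and the first half of (2) I would first note that $iInt(A) \subseteq A$ always holds, being a union of subsets of $A$. When $A$ is infra-open, $A$ is itself one of the sets in the defining family (as $A \subseteq A$), so $A \subseteq iInt(A)$; combining the two inclusions gives $iInt(A) = A$, which is infra-open, and this is exactly what it means for $A$ to be i-genuine. The failure of the converse in (2) I would exhibit by pointing to Example \ref{trzy}, where $B = \{a, b, c\}$ satisfies $iInt(B) = B$ yet $B \notin \tau_{iX}$, so the equality $iInt(A) = A$ does not force infra-openness.

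For (4) the decisive observation is that the only subsets of a singleton $\{x\}$ are $\emptyset$ and $\{x\}$ itself. Hence $iInt(\{x\})$ equals $\{x\}$ if $\{x\} \in \tau_{iX}$ and equals $\emptyset$ otherwise; in either case it belongs to $\tau_{iX}$, so $\{x\}$ is i-genuine. Item (5) is essentially a reformulation of the definition: if $iInt(A) = A$ and $A$ were i-genuine, then $iInt(A)$ would be infra-open by definition, forcing $A = iInt(A)$ to be infra-open. Contrapositively, whenever $iInt(A) = A$ the set $A$ is infra-open or fails to be i-genuine, which is precisely the claimed disjunction.

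The substantive item is (3), and it is the place where the finite-intersection axiom does real work. The inclusion $iInt(A \cap B) \subseteq iInt(A) \cap iInt(B)$ is immediate, since any infra-open $O \subseteq A \cap B$ is in particular contained in both $A$ and $B$, hence contributes to both interiors. For the reverse inclusion I would argue pointwise: given $x \in iInt(A) \cap iInt(B)$, choose $O_1, O_2 \in \tau_{iX}$ with $x \in O_1 \subseteq A$ and $x \in O_2 \subseteq B$; then $O_1 \cap O_2 \in \tau_{iX}$ by closure under finite intersections, and $x \in O_1 \cap O_2 \subseteq A \cap B$, so $x \in iInt(A \cap B)$. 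I expect this reverse inclusion to be the only genuinely delicate step, precisely because it would break down for families merely closed under unions: it is the finite-intersection property of an infra-topology that guarantees the witnessing set $O_1 \cap O_2$ is again infra-open, and without it one would obtain only the inclusion, not the equality.
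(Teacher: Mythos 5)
Your proof is correct and follows essentially the same lines as the paper's, which is only a sketch: the paper handles the singleton case (4) exactly as you do, leaves (1), (2) and (5) to the reader, and dismisses (3) with the remark that it ``goes just like in topological space.'' The one point where your write-up adds real value is item (3): of the two standard topological arguments, only the pointwise one you chose actually survives in the infra-setting. The alternative textbook proof --- that $iInt(A) \cap iInt(B)$ is itself open, lies inside $A \cap B$, and is therefore contained in the largest open subset of $A \cap B$ --- breaks down here, because infra-interiors need not be infra-open (that failure is precisely what i-genuineness measures, cf.\ Ex.~\ref{trzy}) and $iInt(A \cap B)$ need not be the largest infra-open subset of $A \cap B$. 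Your explicit construction of the witnessing set $O_1 \cap O_2$ via closure under finite intersections is exactly the right adaptation, and your references for the remaining items (in particular Ex.~\ref{trzy} for the failure of the converse in (2)) match the paper's intent.
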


\begin{proof}
The proof is simple. As for the third point, it goes just like in topological space. Let us consider singletons. If $\{a\} \in \tau_{iX}$, then it is i-genuine. If not, then it is always true that $\emptyset \subseteq \{a\}$ (and $\emptyset$ is the only infra-open set contained in our singleton). 
\end{proof}

Below we summarize several basic features of i-genuine sets:

\begin{lemma}
In general, the following properties of i-genuine and non-i-genuine sets hold:
\begin{enumerate}
\item The union of two i-genuine sets may not be i-genuine.

\begin{proof}
Take $X = \{a, b, c, d, e\}$ and $\tau_{iX} = \{\emptyset, X, \{a\}, \{b\}, \{c\}, \{a, b\}\}$. Now let us think about $A = \{a, b\}$ and $B = \{c\}$. Both sets belong to $\tau_{iX}$, hence also to $ig\tau_{iX}$. However, $iInt(A \cup B) = \{a, b\} \cup \{c\} = \{a, b, c\} \notin \tau_{iX}$. Thus $M$ is not i-genuine.
\end{proof}

\item The union of two non-i-genuine sets may be i-genuine. 

\begin{proof}
Consider the same infra-space as above. Take $A = \{b, d, c\}$ and $B = \{a, c, d, e\}$. Clearly, $iInt(A) = \{b, c\} \notin \tau_{iX}$ and $iInt(B) = \{a, c\} \notin \tau_{iX}$. However, $A \cup B = X$ and $X$ is i-genuine. 
\end{proof}

\item The intersection of two i-genuine sets is i-genuine. 

\begin{proof}Let $(X, \tau_{iX})$ be an infra-topological space and $A, B \in ig\tau_{iX}$. Then $iInt(A) \in \tau_{iX}$ and $iInt(B) \in \tau_{iX}$. Clearly, $iInt(A \cap B) = iInt(A) \cap iInt(B) \in \tau_{iX}$. Thus $A \cap B$ is i-genuine.

Note that if we restricted the definition of i-genuine sets only to those which have \emph{non-empty} infra-open infra-interior, then this statement would not longer be true. Just consider $A, B$ such that $iInt(A) \neq \emptyset$, $iInt(B) \neq \empty$ but $iInt(A) \cap iInt(B) = \emptyset$.
\end{proof}

\item The intersection of two non-i-genuine sets may be i-genuine.

\begin{proof}
Take $X = \{\emptyset, X, \{a\}, \{b\}, \{a, c\}\}$ and consider $A = \{a, b, c\}$, $B = \{a, b, d\}$. We see that $iInt(A) = \{a, b, c\} \notin \tau_{iX}$, hence $A \notin ig\tau_{iX}$. Also $iInt(B) = \{a, b\} \notin \tau_{iX}$. However, $A \cap B = \{a, b\} \in \tau_{iX} \subseteq ig\tau_{iX}$. 
\end{proof}

\end{enumerate}
\end{lemma}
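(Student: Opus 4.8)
The plan is to treat the four items separately, since only the third is a genuine universal statement while the remaining three are existence (counterexample) claims of the form ``may / may not''. For the counterexamples it suffices to exhibit a single finite infra-space together with concrete sets, so the whole argument reduces to one real proof plus three verifications on small universes.

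The heart of the lemma is item~(3). First I would invoke the distributivity law $iInt(A \cap B) = iInt(A) \cap iInt(B)$ from Lemma~\ref{cztery}(3). If $A, B \in ig\tau_{iX}$, then by definition $iInt(A), iInt(B) \in \tau_{iX}$; since an infra-topology is closed under finite intersections, $iInt(A) \cap iInt(B) \in \tau_{iX}$, and hence $iInt(A \cap B) \in \tau_{iX}$. By definition this says precisely that $A \cap B$ is i-genuine. This is the only place where the defining closure property of $\tau_{iX}$ is used, and it explains the contrast with item~(1): the intersection behaves well for exactly the reason that the union need not.

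For the three counterexamples I would exploit that same structural asymmetry. For item~(1) I would pick two \emph{infra-open} sets $A, B$ --- automatically i-genuine by Lemma~\ref{cztery}(1) --- whose union has interior lying outside $\tau_{iX}$; the spaces of Example~\ref{jeden} already supply such pairs, e.g. two infra-open singletons whose join is not infra-open. For item~(2) the cleanest device is to force $A \cup B = X$: since $X \in \tau_{iX}$ is trivially i-genuine, one only needs two non-i-genuine sets that cover $X$, which is easy once the space has at least two ``interior-collapsing'' sets. For item~(4), symmetrically, I would choose two non-i-genuine sets whose intersection happens to be infra-open, hence i-genuine.

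The main obstacle is not conceptual but a matter of bookkeeping: each ``non-i-genuine'' claim forces one to compute $iInt$ as the union of \emph{all} infra-open subsets and then check that this union escapes $\tau_{iX}$. Because $iInt(A)$ need not itself be infra-open, there is no shortcut, so the small universes must be chosen precisely so that the relevant interiors are visibly non-members of the family. Once those are fixed, every verification is mechanical.
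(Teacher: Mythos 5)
Your plan matches the paper's proof essentially verbatim: item (3) is established exactly as in the text, via $iInt(A \cap B) = iInt(A) \cap iInt(B)$ together with closure of $\tau_{iX}$ under finite intersections, and the paper's counterexamples for (1), (2) and (4) are precisely instances of the devices you describe (two infra-open sets with non-i-genuine union; two non-i-genuine sets covering $X$; two non-i-genuine sets with infra-open intersection). Your strategies for (2) and (4) do instantiate --- in $X = \{a,b,c,d,e\}$ with $\tau_{iX} = \{\emptyset, X, \{a\}, \{b\}, \{c\}, \{a,b\}\}$ take $A = \{b,c,d\}$, $B = \{a,c,d,e\}$ for (2) and $A = \{a,c\}$, $B = \{b,c\}$ for (4) --- and your version of (4) is in fact on firmer ground than the paper's own, whose displayed family $\{\emptyset, X, \{a\}, \{b\}, \{a,c\}\}$ does not contain the set $\{a,b\}$ that it claims is infra-open.
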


Note that the third property allows us to say that $ig\tau_{iX}$ is an infra-topology on $X$. Moreover, from the Lem. \ref{cztery} (1) we conclude that $\tau_{iX}$ is coarser than $ig\tau_{iX}$ (or, equivalently, $ig\tau_{iX}$ is finer than $\tau_{iX}$). On the other hand, the same fact allows us to point out another subclass: the one of \emph{strictly i-genuine} sets, namely such that $iInt(A)$ is both infra-open and non-empty. 

On the base of Lem. \ref{cztery} (5) we introduce another subclass of sets associated with a given $\tau_{iX}$: 

\begin{definition}
Let $(X, \tau_{iX})$ be an infra-topological space and $A \subseteq X$. We say that $A$ is \emph{pseudo-infra-open} (ps-infra-open) \emph{iff} $iInt(A) = A$. The set of all ps-infra-open sets associated with $\tau_{iX}$ will be named $p\tau_{iX}$. 
\end{definition}

Clearly, each infra-open set is also ps-infra-open. In fact, this definition is an infra-topological version of the notion presented by Chakrabarti and Dasgupta (see \cite{DASG}) in the context of minimal structures. Of course, for any $A \subseteq X$, $iInt(A) \in p\tau_{iX}$. 

Now, if $A, B \in p\tau_{iX}$, then $iInt(A \cap B) = iInt(A) \cap iInt(B) = A \cap B$. Moreover, we can prove that $iInt(\bigcup \mathcal{A}) = \bigcup \mathcal{A}$, if $\mathcal{A}$ is a family of ps-infra-open sets. The first inclusion, namely $\subseteq$, is obvious. As for the second one, $\supseteq$, assume that $x \in \bigcup \mathcal{A}$, but $x \notin iInt(\bigcup \mathcal{A})$. Hence, $x$ belongs to certain ps-infra-open set $A \in \mathcal{A}$, but for any infra-open set $G$ such that $G \subseteq \bigcup \mathcal{A}$, $x \notin G$. We know that $A = iInt(A)$, so $x$ is in some infra-open $B \subseteq A$. But such $B$ must be contained in $\bigcup \mathcal{A}$. This contradiction, together with the first part of this paragraph, allows us to say that $p\tau_{iX}$ is a topology on $X$ (of course $iInt(\emptyset) = \emptyset$ and $iInt(X) = X$).

Another class which may be interesting, is a class of minimal infra-open sets. Following \cite{CARPIN}, we introduce the definition below:

\begin{definition}
Let $(X, \tau_{iX})$ be an infra-topological space. We say that $A \in \tau_{iX}$ is minimal-infra-open \emph{iff} for any $B \in \tau_{iX}$ $A \cap B = \emptyset$ or $A \subseteq B$. 
\end{definition}

Let us go back to the Ex. \ref{jeden} (8) but with the assumption that $\{0\}$ is infra-open. Now $\{0\}$ is minimal: first, it is contained in $\mathbb{Z}$ (and in itself), second, it has empty intersections with other infra-open sets, namely $\emptyset$, $\mathbb{Z}^{-}$ and $\mathbb{Z}^{+}$. Now think about Ex. \ref{jeden} (10). Here $\{0\}$ is also minimal. Assume for the moment that $\{0\}$ is still among infra-open sets but our intervals are open, i.e. $a < 0$, $b > 0$. Again, $\{0\}$ is minimal - but now it has empty intersection with any infra-open set different than $\mathbb{R}$.

\section{Closeness and closure}

\subsection{Infra-closed sets}

Following Al-Odhari, we introduce the definition below:

\begin{definition}
Let $(X, \tau_{iX})$ be an infra-topological space. A subset $C \subseteq X$ is called \emph{infra-closed} (in $X$) if $X \setminus C = -C \in \tau_{iX}$, e.g. $-C$ is infra-open. 

For brevity, $c\tau_{iX}$ denotes the set of all infra-closed sets associated with $\tau_{iX}$. 
\end{definition}

In Th. 3.1. \cite{ODH} the author states that "any arbitrary finite intersection of infra-closed sets is an infra-closed set". This is not true, as the following counter-example shows:

\begin{example}
Let $(X, \tau_{iX})$ be like in Ex. \ref{jeden} (2). Now $\{c, d\}$ and $\{b, d\}$ are infra-closed (because $\{a, b\}$ and $\{a, c\}$ are infra-open). However, $\{c, d\} \cap \{b, d\} = \{d\} \notin c\tau_{iX}$. 
\end{example}

Contrary to the opinion of Al-Odhari, the following theorem holds:

\begin{theorem}
Let $(X, \tau_{iX})$ be an infra-topological space. Let $A, B \in c\tau_{iX}$. Then $A \cup B \in c\tau_{iX}$, i.e. the finite union of infra-closed sets is infra-closed.
\end{theorem}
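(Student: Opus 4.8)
The plan is to reduce the claim about unions of infra-closed sets to the defining closure property of $\tau_{iX}$ under finite intersections, via De Morgan duality. By the definition of infra-closedness, the hypothesis $A, B \in c\tau_{iX}$ means precisely that the complements $-A = X \setminus A$ and $-B = X \setminus B$ are infra-open, that is, $-A, -B \in \tau_{iX}$. The goal $A \cup B \in c\tau_{iX}$ is, by the same definition, equivalent to $-(A \cup B) \in \tau_{iX}$, so the whole statement is really a statement about $\tau_{iX}$ rather than about $c\tau_{iX}$.

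First I would rewrite the complement of the union using the set-theoretic identity $-(A \cup B) = (-A) \cap (-B)$, which holds for arbitrary subsets of $X$ and requires no assumption on the space. This displays the complement of $A \cup B$ as the intersection of the two infra-open sets $-A$ and $-B$. Next, invoking clause (2) of the definition of infra-topological space, namely closure under finite intersections, I would conclude $(-A) \cap (-B) \in \tau_{iX}$, and hence $-(A \cup B) \in \tau_{iX}$. Unwinding the definition of infra-closed set then gives $A \cup B \in c\tau_{iX}$, as required.

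There is essentially no obstacle here, and that is precisely the point worth stressing in contrast to the refuted claim of Al-Odhari: De Morgan duality turns the finite-intersection axiom on infra-open sets into a finite-union property for infra-closed sets, and this direction is always valid. The failed dual statement (finite intersections of infra-closed sets) would instead require $\tau_{iX}$ to be closed under finite \emph{unions}, which is exactly the property an infra-topology need not possess, as the preceding counter-example illustrates. Finally, a trivial induction on the number of sets extends the two-set case to any finite union, so the theorem follows in full generality.
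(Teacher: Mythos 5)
Your proposal is correct and matches the paper's own proof essentially verbatim: both arguments apply De Morgan's identity $-(A \cup B) = (-A) \cap (-B)$ and then invoke closure of $\tau_{iX}$ under finite intersections to conclude $-(A \cup B) \in \tau_{iX}$, hence $A \cup B \in c\tau_{iX}$. The only additions in your write-up (the remark contrasting this with the failed dual statement, and the induction to arbitrary finite unions) are harmless elaborations of the same argument.
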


\begin{proof}
Consider $-(A \cup B) = -A \cap -B$. The intersection of infra-open sets is open, hence $-(A \cup B) \in \tau_{iX}$. Hence, $A \cup B \in c\tau_{iX}$. 
\end{proof}

In fact, this result can be derived from the Th. 2.4. in \cite{JAMUNA}. The authors write about generalized weak structures closed under finite intersections. \\

\subsection{Infra-closure}

What about the notion of closure? In \cite{ODH} infra-closure of $A$ is defined as the intersection of all infra-closed sets containing $A$. Again, contrary to the remark given in that paper, such closure is not identical with the \emph{smallest} infra-closed set containing $A$:

\begin{example}
\label{piec}
Let $(X, \tau_{iX})$ be like in Ex. \ref{jeden} (2). Let us consider the set $\{d\}$. It is contained in the following infra-closed sets: $\{c, d\}$, $\{b, d\}$, $\{a, b, d\}$, $\{b, c, d\}$ and $X$. Clearly, the intersection of these sets is just $\{d\}$. But $\{d\}$ is not infra-closed (nor infra-open, as we can add).
\end{example}

Hence, we suggest the following two definitions (the first one is quoted from Al-Odhari):
 
\begin{definition}
Let $(X, \tau_{iX})$ be an infra-topological space and $A \subseteq X$. Then we define infra-closure of $A$ as:

$iCl(A) = \bigcap\{C \subseteq X: C \in c\tau_{iX}, A \subseteq C\}$.

\end{definition}

\begin{definition}
Let $(X, \tau_{iX})$ be an infra-topological space and $A \subseteq X$. We say that $A$ is \emph{c-genuine} set \emph{iff} $iCl(A)$ is infra-closed. The set of all c-genuine sets associated with a given infra-topology $\tau_{iX}$ on $X$ will be named $cg\tau_{iX}$.
\end{definition}

Let us go back to the preceding example. As we have shown, $iCl(\{d\}) = \{d\} \notin c\tau_{iX}$, hence $\{d\}$ is not c-genuine (by the way, we see that singletons need not belong to $cg\tau_{iX}$). On the other hand, $\{a, b\}$ \emph{is} c-genuine. Note that the only infra-closed sets in which $\{a, b\}$ is contained are $\{a, b, d\}$ and $X$. Their intersection is $\{a, b, d\}$ and $\{a, b, d\} \in c\tau_{iX}$ (because $\{c\} \in \tau_{iX}$). 

Again, we have two lemmas:

\begin{lemma}
Let $(X, \tau_{iX})$ be an infra-topological space and $A \subseteq X$ be c-genuine. Then $iCl(A)$ is the smallest (in the sense of inclusion) infra-closed set containing $A$. 
\end{lemma}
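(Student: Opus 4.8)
The plan is to verify three properties of $iCl(A)$ directly from the definitions, invoking the c-genuine hypothesis at precisely the one point where it is needed. First I would check that $A \subseteq iCl(A)$. Since $iCl(A)$ is by definition the intersection $\bigcap\{C : C \in c\tau_{iX}, A \subseteq C\}$, and every member $C$ of this family satisfies $A \subseteq C$, the set $A$ is contained in each such $C$ and hence in their intersection. This inclusion holds for arbitrary $A$ and needs no hypothesis.

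Second, I would observe that $iCl(A)$ is itself infra-closed. This is exactly the content of the c-genuine assumption: by definition, $A$ is c-genuine iff $iCl(A) \in c\tau_{iX}$. Thus, under the hypothesis, $iCl(A)$ is a genuine infra-closed set containing $A$, i.e., it is a legitimate candidate for the role of ``smallest infra-closed set containing $A$'' rather than merely a possibly-non-closed intersection.

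Third, I would establish minimality. Let $D$ be an arbitrary infra-closed set with $A \subseteq D$. Then $D$ belongs to the family $\{C : C \in c\tau_{iX}, A \subseteq C\}$ over which the intersection defining $iCl(A)$ is taken, and therefore $iCl(A) \subseteq D$. Combining the three steps, $iCl(A)$ is infra-closed, contains $A$, and is contained in every infra-closed set containing $A$; hence it is the smallest such set in the sense of inclusion.

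I expect no genuine obstacle here, since the argument is the exact dual of the preceding lemma for $iInt$ and i-genuine sets. The only point worth emphasizing is the indispensable role of the c-genuine hypothesis: without it, the intersection $iCl(A)$ need not be infra-closed (as the singleton $\{d\}$ in Ex.~\ref{piec} demonstrates), and then the phrase ``smallest infra-closed set containing $A$'' would be vacuous because $iCl(A)$ would fail to qualify as infra-closed at all. The c-genuine condition is precisely what repairs this, promoting $iCl(A)$ to the minimum of the family of infra-closed supersets of $A$.
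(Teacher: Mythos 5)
Your proof is correct: the three steps (containment of $A$ in the intersection, infra-closedness of $iCl(A)$ supplied exactly by the c-genuine hypothesis, and minimality because every infra-closed superset of $A$ is a member of the intersected family) constitute the complete argument. The paper states this lemma without proof, treating it as immediate, and your direct definitional verification is precisely the intended reasoning, mirroring the dual lemma for $iInt$.
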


\begin{lemma}
Let $(X, \tau_{iX})$ be an infra-topological space and $A, B \subseteq X$. Then:

\begin{enumerate} 
\item If $A$ is infra-closed, then it is also c-genuine.

\item If $A$ is infra-closed, then $iCl(A) = A$. The converse is not true.

\item $iCl(A \cup B) = iCl(A) \cup iCl(B)$. 

\begin{proof}
See Th. 2.4. a) in \cite{JAMUNA}.
\end{proof}

\item $iCl(A \cap B) \subseteq iCl(A) \cap iCl(B)$. 

\begin{remark}There  no equality here. Take $(X, \tau_{iX})$ from Ex. \ref{jeden} (4) and consider $A = \{b\}$, $B = \{c\}$. Now $iCl(A) = \{b, d\}$ and $iCl(B) = \{c, d\}$. Hence, $iCl(A) \cap iCl(B) = \{d\}$. However, $A \cap B = \emptyset$ and $iCl(A \cap B) = \emptyset$. \end{remark}

\item If $iCl(A) = A$, then $A$ is infra-closed or is not c-genuine.
\end{enumerate} 
\end{lemma}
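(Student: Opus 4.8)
The plan is to treat this lemma as the closure-theoretic dual of Lemma~\ref{cztery} and to reduce everything to a few elementary properties of $iCl$ that I would verify first. Observe that $X \in c\tau_{iX}$, since $-X = \emptyset \in \tau_{iX}$, so the family $\{C \in c\tau_{iX} : A \subseteq C\}$ is always non-empty and $iCl(A)$ is well defined. From the definition, $iCl$ is \emph{extensive} ($A \subseteq iCl(A)$), because every member of that family contains $A$ and hence so does their intersection; and $iCl$ is \emph{monotone} (if $A \subseteq B$ then $iCl(A) \subseteq iCl(B)$), because enlarging the set shrinks the indexing family, so the intersection can only grow. These two facts carry most of the weight.

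For (1) and (2): if $A$ is infra-closed, then $A$ itself belongs to $\{C \in c\tau_{iX} : A \subseteq C\}$, so $iCl(A) \subseteq A$; combined with extensivity this forces $iCl(A) = A$. Since this common value is infra-closed, $A$ is c-genuine, which proves (1) and the forward direction of (2). The failure of the converse in (2) is already witnessed by Ex.~\ref{piec}, where $iCl(\{d\}) = \{d\}$ yet $\{d\} \notin c\tau_{iX}$.

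For (4) and (5): since $A \cap B \subseteq A$ and $A \cap B \subseteq B$, monotonicity gives $iCl(A \cap B) \subseteq iCl(A)$ and $iCl(A \cap B) \subseteq iCl(B)$, hence $iCl(A \cap B) \subseteq iCl(A) \cap iCl(B)$, and the accompanying remark supplies a case of strict inclusion. For (5), suppose $iCl(A) = A$; if in addition $A$ is c-genuine, then $iCl(A)$ is infra-closed by definition, so $A = iCl(A)$ is infra-closed. Contrapositively, whenever $A$ is not infra-closed it cannot be c-genuine, which is exactly the stated disjunction.

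The only genuinely non-routine part is (3), and there I would split the equality into its two inclusions. The inclusion $iCl(A) \cup iCl(B) \subseteq iCl(A \cup B)$ is immediate from monotonicity applied to $A, B \subseteq A \cup B$. The reverse inclusion $iCl(A \cup B) \subseteq iCl(A) \cup iCl(B)$ is the obstacle: unlike the topological setting, $iCl(A)$ and $iCl(B)$ need not be infra-closed (this is precisely the c-genuine / non-c-genuine distinction around which this section is organized), so $iCl(A) \cup iCl(B)$ is not visibly a member of the defining family for $iCl(A \cup B)$, and the naive argument breaks down. For this reason I would invoke Th.~2.4.~a) of \cite{JAMUNA}, whose setting of generalized weak structures closed under finite intersections covers infra-topologies, rather than attempt a self-contained proof.
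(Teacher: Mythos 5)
Your proposal is correct and follows essentially the same route as the paper: the routine items (1), (2), (4), (5) are handled by the extensivity/monotonicity arguments the paper leaves implicit, the counterexamples come from Ex.~\ref{piec} and Ex.~\ref{jeden}~(4) just as in the paper, and item (3) is deferred to Th.~2.4~a) of \cite{JAMUNA}, which is exactly what the paper's proof does. One small correction to your commentary on (3): the reverse inclusion $iCl(A \cup B) \subseteq iCl(A) \cup iCl(B)$ is not actually blocked for a self-contained argument, because the paper has already proved that the union of two infra-closed sets is infra-closed; so if $x \notin iCl(A) \cup iCl(B)$, pick infra-closed $C \supseteq A$ and $D \supseteq B$ with $x \notin C$ and $x \notin D$, and then $C \cup D$ is an infra-closed superset of $A \cup B$ omitting $x$, whence $x \notin iCl(A \cup B)$.
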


The next lemma is about unions and intersections:

\begin{lemma}
In general, the following properties of c-genuine and non-c-genuine sets hold:
\begin{enumerate}

\item The union of two c-genuine sets is c-genuine. 

\begin{proof}
Let $(X, \tau_{iX})$ be an arbitrary infra-space. Assume that $A, B \in cg\tau_{iX}$. Thus, $A \subseteq iCl(A) \in c\tau_{iX}$ and $B \subseteq iCl(B) \in c\tau_{iX}$. Hence, $A \cup B \subseteq iCl(A) \cup iCl(B) \in c\tau_{iX}$ (as we know, the union of infra-closed sets is infra-closed). Now assume that $iCl(A) \cup iCl(B)$ is \emph{not} an intersection of \emph{all} infra-closed sets containing $A \cup B$. Hence, there is $G \in c\tau_{iX}$ such that $A \cup B \subseteq G$ but $iCl(A) \cup iCl(B) \nsubseteq G$. This means that $iCl(A) \nsubseteq G$ or $iCl(B) \nsubseteq G$. Without loose of generality, assume that $iCl(A) \nsubseteq G$. But $A \subseteq A \cup B \subseteq G$ and $iCl(A)$ is an intersection of all infra-closed sets containing $A$. 
\end{proof}

\item The union of two non-c-genuine sets may be c-genuine.

\begin{proof}
Take $X = \{a, b, c\}$ and $\tau_{iX} = \{\emptyset, X, \{a\}, \{b\}, \{c\}\}$. Now $c\tau_{iX} = \{\emptyset, X, \{b, c\}, \{a, c\}, \{a, b\}\}$. $\{a\}$ and $\{b\}$ are not c-genuine but $\{a, b\} \in c\tau_{iX} \subseteq cg\tau_{iX}$. 
\end{proof}

\item The intersection of two c-genuine sets may not be c-genuine. 

\begin{proof}
Take Ex. \ref{jeden} (3). $\{a, b, c\}$ and $\{a, d\}$ are c-genuine (in fact, they are even infra-closed) but $\{a\}$ is not.
\end{proof}

\item The intersection of two non-c-genuine sets may be c-genuine.

\begin{proof}
Consider Ex. \ref{jeden} (4). We have $c\tau_{iX} = \{\emptyset, X, \{b, c, d\}, \{a, c, d\},\\ \{a, b, d\}, \{c, d\}, \{d\}\}$. Now $\{a, d\}$ and $\{b, d\}$ are non-c-genuine but $\{d\}$ is infra-closed, hence c-genuine.
\end{proof}

\end{enumerate}
\end{lemma}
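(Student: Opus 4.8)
The plan is to exploit a complementation duality between c-genuine and i-genuine sets, which reduces all four claims to facts about i-genuine sets that are already established. First I would verify the identity $iCl(A) = -\,iInt(-A)$ for every $A \subseteq X$. This is a routine De Morgan computation: a set $C$ is infra-closed with $A \subseteq C$ exactly when $-C$ is infra-open with $-C \subseteq -A$, so the intersection of all such $C$ is the complement of the union of all such $-C$, which is precisely $-\,iInt(-A)$.

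From this identity I would deduce that $A$ is c-genuine if and only if $-A$ is i-genuine: indeed $iCl(A)$ is infra-closed exactly when its complement $iInt(-A)$ is infra-open, i.e. exactly when $-A$ is i-genuine. This single equivalence is the engine of the argument, since under complementation it interchanges unions with intersections and c-genuineness with i-genuineness.

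With the equivalence in hand, each of the four statements becomes the De Morgan dual of a statement already proved for i-genuine sets. Statement (1), that the union of two c-genuine sets is c-genuine, dualizes to ``the intersection of two i-genuine sets is i-genuine,'' which is exactly the intersection clause of the earlier i-genuine lemma; alternatively one may argue directly, using $iCl(A \cup B) = iCl(A) \cup iCl(B)$ together with the fact that a finite union of infra-closed sets is infra-closed, so that $iCl(A\cup B)$ is infra-closed whenever $iCl(A)$ and $iCl(B)$ are. The three existence claims (2)--(4) dualize respectively to the ``intersection of two non-i-genuine may be i-genuine,'' ``union of two i-genuine may not be i-genuine,'' and ``union of two non-i-genuine may be i-genuine'' clauses of that same lemma, and concrete witnesses are obtained simply by complementing the finite counterexamples used there (or by exhibiting fresh examples on a three- or four-point universe).

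The main obstacle is bookkeeping rather than substance: one must confirm that the duality identity $iCl(A) = -\,iInt(-A)$ holds with no extra hypotheses on $\tau_{iX}$, and then keep straight which i-genuine clause each c-genuine clause corresponds to. Once the identity is checked, no further topological input is required, and the counterexamples for (2)--(4) follow immediately by complementation.
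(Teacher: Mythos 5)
Your approach is correct, and it is genuinely different from the paper's. The paper proves clause (1) directly: it forms $iCl(A)\cup iCl(B)$, notes it is infra-closed since finite unions of infra-closed sets are infra-closed, and then re-derives inline that this union equals $iCl(A\cup B)$; clauses (2)--(4) are settled by three ad hoc finite counterexamples, with no appeal to the i-genuine lemma at all. You instead establish the duality $iCl(A) = -\,iInt(-A)$ (which does hold for every infra-topology with no extra hypotheses --- your De Morgan computation is exactly right, and closure under finite intersections plays no role in it), deduce that $A$ is c-genuine iff $-A$ is i-genuine, and transport the earlier i-genuine lemma across complementation. Your clause-matching is accurate: (1) corresponds to the intersection of i-genuine sets, (2) to the intersection of non-i-genuine sets, (3) to the union of i-genuine sets, (4) to the union of non-i-genuine sets. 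This buys uniformity and makes explicit a symmetry the paper leaves implicit, at the price of depending on the earlier lemma; the paper's treatment is more self-contained and exhibits concrete closed-set witnesses. Your alternative direct argument for (1), via the identity $iCl(A\cup B)=iCl(A)\cup iCl(B)$ (already recorded in the paper, with reference to Jamunarani, Jeyanthi and Noiri) together with closure of $c\tau_{iX}$ under finite unions, is actually tighter than the paper's own proof, which re-proves that identity inside the argument.

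One caveat concerns clause (2). The witness the paper supplies for the i-genuine clause you dualize to (``the intersection of two non-i-genuine sets may be i-genuine'') is internally inconsistent: for $\tau_{iX}=\{\emptyset, X, \{a\},\{b\},\{a,c\}\}$ it asserts both $iInt(B)=\{a,b\}\notin\tau_{iX}$ and $A\cap B=\{a,b\}\in\tau_{iX}$, and with that family $\{a,b\}$ is in fact not i-genuine. So ``simply complementing the counterexample used there'' fails for this one clause; you must invoke your fallback and construct a fresh witness. Such witnesses exist, so the claim and your method survive: for instance, on $X=\{a,b,c,d,e\}$ with $\tau_{iX}=\{\emptyset,X,\{a\},\{b\},\{a,b\},\{a,c\},\{b,d\}\}$, the sets $\{a,b,c\}$ and $\{a,b,d\}$ are non-i-genuine while their intersection $\{a,b\}$ is infra-open; or, more economically, complement the paper's own witness for clause (2) of the present lemma ($\{b,c\}$ and $\{a,c\}$ are non-i-genuine in $\{\emptyset,X,\{a\},\{b\},\{c\}\}$ on $\{a,b,c\}$, yet $\{b,c\}\cap\{a,c\}=\{c\}$ is infra-open). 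The conclusion: your reduction is sound, but for clause (2) it cannot proceed purely by citation of the earlier lemma's example.
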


\emph{Per analogiam} with pseudo-infra-openess we may introduce the following definition:

\begin{definition}
Let $(X, \tau_{iX})$ be an infra-topological space and $A \subseteq X$. We say that $A$ is \emph{pseudo-infra-closed} (ps-infra-closed) \emph{iff} $iCl(A) = A$. The set of all ps-infra-closed sets associated with $\tau_{iX}$ will be named $pc\tau_{iX}$. 
\end{definition}

Clearly, for any $A \subseteq X$, $iCl(A) \in pc\tau_{iX}$. Assume now that $A, B \in pc\tau_{iX}$. We see that $iCl(A \cup B) = iCl(A) \cup iCl(B) = A \cup B$, hence the finite union of ps-infra-closed sets is also ps-infra-closed. Note that $\{d\}$ in Ex. \ref{piec} belongs to $pc\tau_{iX}$.

\section{Infra-topologies and modal logic}

\subsection{Basics}
This section deals with formal logic. Our main result is the one about completeness of the logic named \gitlog with respect to the class of all generalized infra-topological models. These structures are based on generalized infra-spaces but slightly more complex. Basically, we repeat the reasoning from our paper \cite{WITCZAK}. Here it is presented in infra-setting, while originally this solution was designed for spaces closed under arbitrary unions (and not necessarily for finite intersections). What is really important, is the fact that we speak about \emph{generalized} spaces. For this reason, some points (possible worlds, if we speak about logical model) are beyond any open set. However, we can connect each such point with a point having open neighbourhood. This allows us to discuss two modalities, namely two necessities $\Box$ and $\blacksquare$ (the first one is stronger). In general, the idea of two necessities has been studied by some authors: for example, Bo\v{z}i\'c and Do\v{s}en in \cite{BOZIC}. These authors provided a system with $\Box_{S4}$ and $\Box_{K}$ (necessities taken from two normal logics). It is possible to show that this system can be translated into the normal \emph{intuitionistic} modal logic with axioms \kax and \tax. This line of reasoning has been recognised also by us in \cite{WITCZAK2} (in the neighborhood framework). However, our infra-topological logic will not be normal, it will belong to the vast area of weak modal logics. Our conjecture is that it can be connected in some way with subintuitionistic modal logics.

Technical details are presented below. 

\subsection{Alphabet and language}
Our language is propositional, i.e. without quantifiers. $PV$ is a fixed and denumerable set of propositional variables $p, q, r, s, ...$. Logical connectives and operators are $\land$, $\lor$, $\rightarrow$, $\bot$, $\lnot$, $\Box$ and $\blacksquare$. Formulas are generated recursively: if $\varphi$, $\psi$ are \emph{well-formed formulas} then $\varphi \lor \psi$, $\varphi \land \psi$, $\varphi \rightarrow \psi$, $\Box \varphi$ and $\blacksquare \varphi$ are \emph{wff's} too. We shall work with the following list of modal axioms and rules \footnote{Actually, it would be better to speak about \emph{schemes} of axioms and rules.} (they are presented with respect to $\Box$; later we shall use subscripts to distinguish between $\Box$- and $\blacksquare$-versions):

\begin{multicols}{2}
\begin{itemize}

\item \maxx: $\Box(\varphi \land \psi) \rightarrow \Box \varphi \land \Box \psi$
\item \cax: $\Box \varphi \land \Box \psi \rightarrow \Box(\varphi \land \psi) $
\item \tax: $\Box \varphi \rightarrow \varphi$
\item \four: $\Box \varphi \rightarrow \Box \Box \varphi$
\item \nax: $\Box \top$ (\emph{truth axiom})
\item \rext: $\varphi \leftrightarrow \psi \vdash \Box \varphi \leftrightarrow \Box \psi$ (rule of extensionality)
\item \nec: $\varphi \vdash \Box \varphi$ (rule of necessity)
\item \monot: $\varphi \rightarrow \psi \vdash \Box \varphi \rightarrow \Box \psi$ (rule of monotonicity)
\item \mpon: $\varphi, \varphi \rightarrow \psi \vdash \psi$ (\emph{modus ponens})

\end{itemize}
\end{multicols}

The notion of syntactic consequence is typical for modal logic: if $w$ is a set of \gitlog-formulas, then $w \vdash \varphi$ \emph{iff} $\varphi$ can be obtained from the finite subset of $w$ by using axioms of $\gitlog$ and rule \mpon. Clearly, if $\varphi \in w$, then $w \vdash \varphi$. 

\subsection{The notion of model} 

In the next point we define generalized infra-topological models:

\begin{definition}
We define generalized infra-topological model (\gitop-model) as a quintuple $M = \langle W, \tau_{iW}, \mathbf{f}, \mathcal{N}, V \rangle$ where $\tau_{iW}$ is a generalized infra-topology on $W$, $V$ is a function from $PV$ into $P(W)$ and $W$ consists of two separate subsets $Y_1$ and $Y_2$:

\begin{enumerate}
\item If $w \in Y_1$, then we link $w$ with certain $v \in \bigcup \tau_{iW}$ (by means of a function $\mathbf{f}$, i.e. $\mathbf{f}$ is a function from $Y_1$ into $\bigcup \tau_{iW}$).

\item If $w \in Y_2$ then we associate $w$ with certain family $\mathcal{N}(w) = \mathcal{N}_{w} \subseteq P(P(W))$, hence $\mathcal{N}$ is a function from $Y_2$ into $P(P(W))$.

\end{enumerate}
\end{definition}

Now we define forcing of complex formulas:

\begin{definition}
If $M = \langle W, \tau_{iW}, \mathbf{f}, \mathcal{N}, V \rangle$ is an \gitop-model, then we define relation $\Vdash$ between worlds and formulas as below:

\begin{enumerate}
\item $w \Vdash q \Leftrightarrow w \in V(q)$ for any $q \in PV$.
\item $w \Vdash \varphi \land \psi$ (resp. $\varphi \lor \psi$) $\Leftrightarrow w \Vdash \varphi$ and (resp. or) $w \Vdash \psi$.
\item $w \Vdash \varphi \rightarrow \psi \Leftrightarrow w \nVdash \varphi$ or $w \Vdash \psi$.
\item $w \Vdash \lnot \varphi \Leftrightarrow w \nVdash \varphi$.
\item $w \Vdash \Box \varphi \Leftrightarrow \text{ there is } X \in \tau_{iW} \text{ such that } w \in X \text{ and for each } v \in X, v \Vdash \varphi$. 
\item $w \Vdash \blacksquare \varphi \Leftrightarrow$

\begin{enumerate}

\item There is $X \in \tau_{iW}$ such that $\mathbf{f}(w) \in X$ and for any $v \in X$, $v \Vdash \varphi$; \quad \emph{iff} $w \in Y_1$.

\item $V(\varphi) \in \mathcal{N}_{w}$; \quad \emph{iff} $w \in Y_2$. 

\end{enumerate}
\end{enumerate}
\end{definition}

We say that $\varphi$ is \emph{satisfied in a given world} $w$ if $w \Vdash \varphi$. It is \emph{true in a given model} if it is satisfied in each of its worlds. Finally, it is \emph{tautology} if it is true in each infra-topological model. 

Our logic \gitlog is defined as the following system of axiom schemes and rules: $\cpc \cup \{\cax_{\Box}, \maxx_{\Box}, \tax_{\Box}, \four_{\Box}, \rext_{\Box}, \rext_{\blacksquare}, \mpon\}$. By \cpc we understand all modal instances of the classical propositional tautologies. Note that we do not have rule of necessity here (this is because $W$ may not be infra-open). For this reason, we do not identify $\cax_{\Box} \land \maxx_{\Box}$ (i.e. $\Box (\varphi \land \psi) \leftrightarrow \Box \varphi \land \Box \psi$) with well-known axiom $\kax_{\Box}$ (namely, $\Box(\varphi \rightarrow \psi) \rightarrow (\Box \varphi \rightarrow \Box \psi)$. 

It is not difficult to prove theorem below:

\begin{theorem}
Logic \gitlog is sound with respect to the class of all \gitop-models, i.e. each axiom is tautology and each rule holds.
\end{theorem}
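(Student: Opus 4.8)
The plan is to verify soundness in the routine way: show that every axiom scheme of \gitlog is a tautology (forced at every world of every \gitop-model) and that every rule preserves this property. Throughout I write $V(\varphi) = \{v \in W : v \Vdash \varphi\}$ for the truth set of $\varphi$; this is how the clause for $\blacksquare$ on $Y_2$ must be read. The propositional fragment is immediate: since the forcing clauses for $\land$, $\lor$, $\rightarrow$, $\lnot$ are the classical Boolean ones evaluated pointwise at each $w$, every instance of \cpc is forced at every world, and \mpon preserves forcing at each world directly from clause (3) of the forcing definition. So the only substantive work concerns the modal principles.

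For the $\Box$-axioms I would argue pointwise, fixing a world $w$. For $\maxx_{\Box}$ one re-uses the same witness: if $X \in \tau_{iW}$ with $w \in X$ forces $\varphi \land \psi$ throughout, then $X$ witnesses both $\Box \varphi$ and $\Box \psi$. The key step is $\cax_{\Box}$, the one place where the defining closure property of an infra-topology is genuinely used: given open $X_1 \ni w$ forcing $\varphi$ and open $X_2 \ni w$ forcing $\psi$, the intersection $X_1 \cap X_2$ again lies in $\tau_{iW}$ by closure under finite intersection, still contains $w$, and forces $\varphi \land \psi$ throughout, so it witnesses $\Box(\varphi \land \psi)$. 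For $\tax_{\Box}$ I use that the witnessing open set contains $w$ itself, whence $w \Vdash \varphi$; and for $\four_{\Box}$ I note that if $X$ witnesses $\Box \varphi$ at $w$, then for every $v \in X$ the very same $X$ is an open neighbourhood witnessing $v \Vdash \Box \varphi$, so $X$ already witnesses $\Box \Box \varphi$ at $w$. The rule $\rext_{\Box}$ follows because $w \Vdash \Box \varphi$ depends on $\varphi$ only through its truth set: if $\varphi \leftrightarrow \psi$ is valid then $V(\varphi) = V(\psi)$ in every model, and the quantification ``for each $v \in X$'' cannot tell them apart.

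The genuinely two-sided work concerns $\blacksquare$, whose forcing clause splits according to whether $w \in Y_1$ or $w \in Y_2$. For $\rext_{\blacksquare}$ I would treat both cases: on $Y_1$ the argument is identical to that for $\rext_{\Box}$ (the clause again sees $\varphi$ only via the set of points forcing it), while on $Y_2$ it reduces to the observation that $V(\varphi) = V(\psi)$ gives $V(\varphi) \in \mathcal{N}_w \Leftrightarrow V(\psi) \in \mathcal{N}_w$. The bridging axiom $\Box \varphi \rightarrow \blacksquare \varphi$ is where the structural constraints on the model do the real work, and I expect it to be the main point requiring care. Suppose $w \Vdash \Box \varphi$, witnessed by $X \in \tau_{iW}$ with $w \in X$. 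Then $w \in \bigcup \tau_{iW} \subseteq Y_1$, so $w$ falls under the $Y_1$ clause for $\blacksquare$; moreover $\mathbf{f}(w) = w$, since $\mathbf{f}$ is the identity on $\bigcup \tau_{iW}$. Hence the same $X$ contains $\mathbf{f}(w) = w$ and forces $\varphi$ throughout, yielding $w \Vdash \blacksquare \varphi$. Assembling these cases gives validity of every axiom and preservation of validity by every rule, which is soundness; I would finally remark that the absence of a necessitation rule is harmless here precisely because $W$ need not be infra-open, so $\Box$ and $\blacksquare$ are never forced merely by a formula being globally true.
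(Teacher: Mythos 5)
Your proposal is correct and follows essentially the same approach as the paper: the paper's proof verifies only the bridging axiom $\Box \varphi \rightarrow \blacksquare \varphi$ (using $\bigcup \tau_{iW} \subseteq Y_1$ and $\mathbf{f}(w) = w$, exactly as you do) and leaves the remaining axioms and rules as routine, which is precisely the pointwise verification you carry out in full, including the correct use of closure under finite intersections for $\cax_{\Box}$ and the truth-set reading of the $Y_2$ clause for $\rext_{\blacksquare}$.
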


We say that any superset of \gitlog is a \emph{theory}. Using standard Henkin method we can extend each consistent (thus not containing $\bot$) theory $w$ to the maximal consistent theory $u$ (maximality means here that for any formula $\varphi$, either $\varphi \in u$ or $\lnot \varphi \in u$, but not both). 

\begin{definition}
We define canonical infra-model as a quintuple $\langle W, \tau_{iW}, \mathbf{f}, \mathcal{N}, V \rangle$, where:

\begin{enumerate}
\item $W$ is a collection of all maximal \gitlog theories. 

\item $\tau_{iW}$ is a generalized infra-topology on $W$ established in the following way: it contains all these subsets of $W$ which can be written as $\widehat{\Box \varphi}$, i.e. $\{z \in W; \Box \varphi \in z\}$ (for certain $\varphi$).

\item $Y_1$ is a set of all such theories from $W$ for which there is a theory $u \in \bigcup \tau_{iW}$ such that for any formula $\varphi$ we have: $\Box \varphi \in u \Leftrightarrow \blacksquare \varphi \in w$.

\item $Y_2 = W \setminus Y_1$. 

\item $\mathbf{f}$ is a function from $Y_1$ into $\bigcup \tau_{iW}$ such that $\mathbf{f}(w) = u$ where $u$ is as in (3).

\item $\mathcal{N}$ is a function from $Y_2$ into $P(P(W))$ defined as: $\mathcal{N}_{w} = \{ \widehat{\varphi}; \blacksquare \varphi \in w\}$. 

\item $V$ is a function from $PV$ into $P(W)$ such that for any $q \in PV$, $V(q) = \{w \in W; q \in W\}$.

\end{enumerate}
\end{definition}

Of course we must prove the following lemma:

\begin{lemma}
Generalized canonical infra-model is indeed \gitop-model. 
\end{lemma}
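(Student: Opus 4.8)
The plan is to check the clauses of the \gitop-definition one at a time against the canonical data, isolating from the outset the two that carry all the content: that $\tau_{iW}$ really is a generalized infra-topology, and that the pair of conditions $\bigcup\tau_{iW}\subseteq Y_1$ together with $\mathbf{f}|_{\bigcup\tau_{iW}}=\mathrm{id}$ holds. The remaining clauses are immediate: $V(q)\in P(W)$ by fiat, each $\mathcal{N}_w=\{\widehat{\varphi}:\blacksquare\varphi\in w\}$ is a subset of $P(W)$ so $\mathcal{N}_w\in P(P(W))$, and $Y_2:=W\setminus Y_1$ makes $\{Y_1,Y_2\}$ a partition by definition.

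First I would verify that $\tau_{iW}$ is a generalized infra-topology. For $\emptyset$, note that $\tax_\Box$ yields $\vdash\lnot\Box\bot$, so no maximal theory contains $\Box\bot$ and hence $\widehat{\Box\bot}=\emptyset\in\tau_{iW}$. For closure under binary intersection the one genuinely algebraic computation is $\widehat{\Box\varphi}\cap\widehat{\Box\psi}=\widehat{\Box(\varphi\land\psi)}$, which follows from $\cax_\Box$ and $\maxx_\Box$ together with maximality (membership of $\Box\varphi$ and of $\Box\psi$ is equivalent to membership of $\Box(\varphi\land\psi)$). The only remaining intersections are $W_\blacksquare\cap W_\blacksquare$, which is trivial, and $W_\blacksquare\cap\widehat{\Box\varphi}$, which reduces to the identity highlighted next; so the infra-topology clause is not really independent of the crux below.

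The crux, and what I expect to be the main obstacle, is the single set-theoretic identity
\[
\bigcup\tau_{iW}=W_\blacksquare ,
\]
equivalently $\widehat{\Box\varphi}\subseteq W_\blacksquare$ for every $\varphi$. The inclusion $\supseteq$ is free, since $W_\blacksquare\in\tau_{iW}$. For $\subseteq$ one must show that $\Box\varphi\in z$ forces $z\in W_\blacksquare$, i.e. $\blacksquare\psi\in z\Rightarrow\Box\psi\in z$ for all $\psi$. The only tools available on the $\blacksquare$-side are $\rext_\blacksquare$ and the inclusion axiom $\Box\varphi\to\blacksquare\varphi$, so this step — turning a $\blacksquare$ back into a $\Box$ in the presence of some ambient $\Box\varphi$ — is exactly the delicate interaction point of the two modalities, and is where I expect the real work (or a genuine obstruction) to lie. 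Once the identity is in hand everything else falls out: the intersection $W_\blacksquare\cap\widehat{\Box\varphi}$ equals $\widehat{\Box\varphi}\in\tau_{iW}$, completing closure; and every open point $w$ now lies in $W_\blacksquare$, so by the defining property of $W_\blacksquare$ and the axiom $\Box\varphi\to\blacksquare\varphi$ we obtain $\Box\varphi\in w\Leftrightarrow\blacksquare\varphi\in w$. Taking $u:=w$ then witnesses $w\in Y_1$, giving $\bigcup\tau_{iW}\subseteq Y_1$, and forces $\mathbf{f}(w)=w$, giving $\mathbf{f}|_{\bigcup\tau_{iW}}=\mathrm{id}$.

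Finally, $\mathbf{f}$ is well defined as a function $Y_1\to\bigcup\tau_{iW}$: for each $w\in Y_1$ the defining clause already supplies a witness $u\in\bigcup\tau_{iW}$ with $\Box\chi\in u\Leftrightarrow\blacksquare\chi\in w$, and where several such $u$ exist one is fixed by choice, always selecting $u=w$ when $w\in\bigcup\tau_{iW}=W_\blacksquare$ — legitimate precisely because $w$ is then itself a witness. Membership of the value in $\bigcup\tau_{iW}$ is part of that same clause, so no separate existence argument is needed here (the Lindenbaum-style realization of the $\Box$-type $\{\chi:\blacksquare\chi\in w\}$ will instead be required later, for the truth lemma). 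Hence, modulo the identity $\bigcup\tau_{iW}=W_\blacksquare$, all clauses of the \gitop-definition hold and the canonical structure is a \gitop-model.
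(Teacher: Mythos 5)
Your proposal is not a complete proof: you correctly reduce everything to the single identity $\bigcup\tau_{iW}=W_{\blacksquare}$, equivalently $\widehat{\Box\varphi}\subseteq W_{\blacksquare}$ for every $\varphi$, and then leave that identity unproven, flagging it as the place where ``the real work (or a genuine obstruction)'' lies. Since closure of $\tau_{iW}$ under the intersection $W_{\blacksquare}\cap\widehat{\Box\varphi}$, the inclusion $\bigcup\tau_{iW}\subseteq Y_1$, and the identity clause for $\mathbf{f}$ all hang on exactly this inclusion, your argument is missing its load-bearing step; the parts you do prove (that $\emptyset=\widehat{\Box\bot}$ via $\tax_{\Box}$, that $\widehat{\Box\varphi}\cap\widehat{\Box\psi}=\widehat{\Box(\varphi\land\psi)}$ via $\cax_{\Box}$ and $\maxx_{\Box}$, and the trivial clauses) coincide with the paper's treatment of those same points.

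Your suspicion of an obstruction, however, is exactly right, and this is where your attempt and the paper part ways. The paper disposes of the crux in one sentence: if $\Box\varphi\in w$, then ``a fortiori'' the implication $\blacksquare\varphi\in w\Rightarrow\Box\varphi\in w$ holds. This verifies the defining condition of $W_{\blacksquare}$ only for the \emph{single} formula $\varphi$ whose box lies in $w$, whereas membership in $W_{\blacksquare}$ requires $\blacksquare\psi\in w\Rightarrow\Box\psi\in w$ for \emph{every} $\psi$; a quantifier is silently dropped. In fact the inclusion is refutable. The set $\{\Box p,\blacksquare q,\lnot\Box q\}$ is \gitlog-consistent: derivations from hypotheses use only \mpon, so inconsistency would make $\Box p\land\blacksquare q\rightarrow\Box q$ a theorem; the translation replacing every subformula $\blacksquare\chi$ by $\top$ sends all axioms and rules of \gitlog to theorems and rules of its $\Box$-fragment, so $\Box p\rightarrow\Box q$ would be a theorem, yet it fails in the one-point \gitop-model $W=\{w\}$, $\tau_{iW}=\{\emptyset,W\}$, $V(p)=W$, $V(q)=\emptyset$, in which every theorem of \gitlog holds. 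Lindenbaum's construction then yields a maximal theory $w_0\in\widehat{\Box p}\setminus W_{\blacksquare}$, and for this $w_0$ the canonical structure cannot satisfy the model definition: $w_0\in\bigcup\tau_{iW}$, so either $w_0\notin Y_1$ (violating clause (3)) or clause (4) forces $\mathbf{f}(w_0)=w_0$, giving $\Box\chi\in w_0\Leftrightarrow\blacksquare\chi\in w_0$ for all $\chi$, contradicted by $\chi=q$. So the lemma is false as stated, not merely unproven---and since $\Box p\land\blacksquare q\rightarrow\Box q$ is valid in every \gitop-model but not provable, the completeness theorem it serves also fails. No proof along your lines or the paper's can close this gap; one must change the logic (e.g.\ add the \gitop-valid scheme $\Box\varphi\rightarrow(\blacksquare\psi\rightarrow\Box\psi)$) or the canonical construction.
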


\begin{proof}
We must prove that $\tau_{iW}$ is a generalized infra-topology on $W$. Clearly, $\emptyset$ can be written as $\widehat{\Box \bot}$. Now let as assume that $A, B \in \tau_{iW}$ and consider $A \cap B$. Of course $A = \widehat{\Box \varphi}$ for certain $\varphi$ and $B = \widehat{\Box \psi}$ for certain $\psi$. Then $A \cap B = \{z \in W; \Box \varphi \in z \text{ and } \Box \psi \in z\}$. But then we use axioms $\cax_{\Box}$ and $\maxx_{\Box}$ to say that $A \cap B = \{z \in W; \Box(\varphi \land \psi) \in z\}$. 

\end{proof}

Using Lindenbaum theorem and rule of $\blacksquare$-extensionality, we can prove the following two lemmas (typical for neighborhood semantics, see \cite{PACUIT}):

\begin{lemma}
Let $W$ be a collection of all maximal theories of \gitlog and let $\{z \in W; \varphi \in z \} = \{z \in W ; \psi \in z\}$. Then $\varphi \rightarrow \psi \in \gitlog$. 
\end{lemma}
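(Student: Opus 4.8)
The plan is to reduce the claim to a single inclusion and then argue by contraposition. Since the hypothesis gives the \emph{equality} $\{z \in W : \varphi \in z\} = \{z \in W : \psi \in z\}$, in particular $\widehat{\varphi} \subseteq \widehat{\psi}$ (where I abbreviate $\widehat{\chi} := \{z \in W : \chi \in z\}$), and it suffices to derive $\varphi \rightarrow \psi \in \gitlog$ from this one inclusion. So I would assume, towards a contradiction, that $\varphi \rightarrow \psi \notin \gitlog$, and then produce a maximal theory lying in $\widehat{\varphi} \setminus \widehat{\psi}$, which directly contradicts the inclusion.

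The first technical step is to pass from non-derivability to the consistency of a suitable finite set. Because \gitlog contains \cpc (all modal instances of the classical propositional tautologies) together with \mpon, the usual classical deduction-theorem reasoning is available at the propositional level. Hence, if $\{\varphi, \lnot \psi\}$ were \gitlog-inconsistent (i.e. proved $\bot$), we could recover $\vdash \varphi \rightarrow \psi$; contrapositively, $\varphi \rightarrow \psi \notin \gitlog$ guarantees that $\{\varphi, \lnot \psi\}$ is consistent. I would spell this out using only the propositional fragment, so that the weak, non-normal modal axioms play no role and cannot interfere.

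Next I would invoke the Lindenbaum (Henkin) extension already quoted above: every consistent theory extends to a maximal consistent one. Applying it to $\{\varphi, \lnot \psi\}$ yields a maximal consistent $w \in W$ with $\varphi \in w$ and $\lnot \psi \in w$. By maximality (for every formula either it or its negation belongs to $w$, but not both), $\lnot \psi \in w$ forces $\psi \notin w$. Thus $w \in \widehat{\varphi}$ while $w \notin \widehat{\psi}$, contradicting $\widehat{\varphi} \subseteq \widehat{\psi}$. This contradiction establishes $\varphi \rightarrow \psi \in \gitlog$, as required. Note that the rule of $\blacksquare$-extensionality mentioned in the surrounding text is not actually needed for this particular lemma; it will enter only in the companion statement about the modal operator.

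The only delicate point — and the step I would check most carefully — is the passage in the second paragraph from $\varphi \rightarrow \psi \notin \gitlog$ to the consistency of $\{\varphi, \lnot \psi\}$. It is entirely routine over a classical base, but since \gitlog is deliberately presented without the rule of necessity and with a merely weak modal apparatus, one must verify that this argument invokes nothing beyond \cpc and \mpon. Everything else is a direct application of maximality and the Lindenbaum theorem, so no genuine obstacle remains.
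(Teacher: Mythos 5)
Your proof is correct and is essentially the argument the paper intends: the paper states this lemma without an explicit proof, saying only that it follows by the Lindenbaum theorem (being standard for neighborhood semantics, cf.\ Pacuit), and your contraposition-plus-Lindenbaum argument is exactly that standard proof, including the correct observation that $\blacksquare$-extensionality is needed only for the companion lemma about $\mathcal{N}_{w}$. The sole cosmetic adjustment is that Lindenbaum should be applied to the theory $\gitlog \cup \{\varphi, \lnot \psi\}$ rather than to $\{\varphi, \lnot \psi\}$ itself, since in the paper a (maximal consistent) theory is by definition a superset of \gitlog, and the deduction-theorem step is legitimate precisely because the paper's consequence relation allows only \mpon in derivations from assumptions.
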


\begin{lemma}
In a canonical \gitop-model we have the following property: for each maximal theory $w$, if $\{z \in W; \varphi \in z\} \in \mathcal{N}_{w}$ and $\{z \in W; \varphi \in z\} = \{z \in W; \psi \in z\}$, then $\blacksquare \psi \in w$. 
\end{lemma}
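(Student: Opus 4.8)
The plan is to treat this as the standard extensionality argument for a neighborhood-style canonical model. The essential subtlety, and the one step I expect to be the real obstacle, is that membership of $\widehat{\varphi}$ in $\mathcal{N}_w$ does \emph{not} by itself give $\blacksquare \varphi \in w$; by the definition $\mathcal{N}_w = \{\widehat{\chi}; \blacksquare \chi \in w\}$ it only guarantees that \emph{some} formula sharing the extension of $\varphi$ is boxed by $w$. So the first move is to unpack the hypothesis $\widehat{\varphi} \in \mathcal{N}_w$ and extract a witnessing formula $\chi$ with $\widehat{\chi} = \widehat{\varphi}$ and $\blacksquare \chi \in w$. Resisting the temptation to read $\widehat{\varphi} \in \mathcal{N}_w$ as $\blacksquare \varphi \in w$ is the whole point; everything afterwards is routine.

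With $\chi$ in hand, I would combine $\widehat{\chi} = \widehat{\varphi}$ with the second hypothesis $\widehat{\varphi} = \widehat{\psi}$ to obtain $\widehat{\chi} = \widehat{\psi}$, i.e. $\{z \in W; \chi \in z\} = \{z \in W; \psi \in z\}$. Then I would invoke the preceding lemma (the Lindenbaum lemma on sets with equal extensions) in both directions: it yields $\chi \rightarrow \psi \in \gitlog$ and, by symmetry, $\psi \rightarrow \chi \in \gitlog$, hence $\chi \leftrightarrow \psi \in \gitlog$. Applying the rule $\rext_{\blacksquare}$ of $\blacksquare$-extensionality to this biconditional then delivers $\blacksquare \chi \leftrightarrow \blacksquare \psi \in \gitlog$.

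Finally, since every maximal theory $w$ contains all theorems of $\gitlog$, we have $\blacksquare \chi \leftrightarrow \blacksquare \psi \in w$; together with $\blacksquare \chi \in w$ and the closure of maximal theories under $\mpon$, this forces $\blacksquare \psi \in w$, which is exactly the conclusion sought. The only genuinely delicate ingredient is the interplay between the purely set-theoretic equality of extensions and its syntactic shadow $\chi \leftrightarrow \psi$ supplied by the previous lemma, after which $\rext_{\blacksquare}$ transports necessity across provably equivalent formulas and the argument closes by straightforward propositional bookkeeping inside $w$.
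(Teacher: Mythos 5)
Your proof is correct and follows exactly the route the paper indicates: the paper does not spell out a proof but states that the lemma follows from the Lindenbaum-style lemma on equal extensions together with the rule $\rext_{\blacksquare}$, which is precisely the machinery you use. Your explicit handling of the witnessing formula $\chi$ (resisting the misreading of $\widehat{\varphi} \in \mathcal{N}_{w}$ as $\blacksquare \varphi \in w$) is the right way to fill in the detail the paper leaves implicit.
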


The next lemma is characteristic for the Henkin method (but also crucial for all considerations about completeness):

\begin{lemma}
In a canonical \gitop-model, for any $\gamma$ and for any maximal consistent theory $w$, we have:

$w \Vdash \gamma \Leftrightarrow \gamma \in w$.
\end{lemma}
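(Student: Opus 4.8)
The plan is to prove the statement by induction on the complexity of $\gamma$, carrying the two modal cases together at each level of the induction. The base case $\gamma = q \in PV$ is immediate from the definition of $V$ in the canonical model: $w \Vdash q$ iff $w \in V(q) = \{w \in W; q \in w\}$, i.e. iff $q \in w$. The Boolean cases ($\land,\lor,\rightarrow,\lnot,\bot$) are the routine ones: they follow from the fact that each maximal consistent theory $w$ is deductively closed, closed under \mpon, and decides every formula (exactly one of $\varphi$, $\lnot\varphi$ lies in $w$), together with the matching forcing clauses. Throughout I will use two standard facts about maximal theories, established in the two lemmas preceding this one: that $\widehat{\alpha} \subseteq \widehat{\beta}$ implies $\vdash_{\gitlog} \alpha \rightarrow \beta$ (the Lindenbaum direction), and the $\blacksquare$-extensionality property of $\mathcal{N}_w$. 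I will also use that, although \gitlog lists neither \nec nor \monot, the rule \monot is derivable from $\maxx_{\Box}$ and $\rext_{\Box}$ (from $\vdash \varphi \rightarrow \psi$ one gets $\vdash \varphi \leftrightarrow (\varphi \land \psi)$, hence by $\rext_{\Box}$ and $\maxx_{\Box}$ that $\vdash \Box\varphi \rightarrow \Box\psi$); this monotonicity is the workhorse of the modal steps.

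For $\gamma = \Box\varphi$, the forcing clause plus the inductive hypothesis for the strictly simpler $\varphi$ reduce the claim to: $\Box\varphi \in w$ iff there is $X \in \tau_{iW}$ with $w \in X \subseteq \widehat{\varphi}$. For the $(\Leftarrow)$ direction I take the witness $X = \widehat{\Box\varphi}$, which lies in $\tau_{iW}$ by construction; then $w \in X$ since $\Box\varphi \in w$, and $X \subseteq \widehat{\varphi}$ by axiom $\tax_{\Box}$. For the $(\Rightarrow)$ direction I analyse the witnessing open $X$, recalling that every element of $\tau_{iW}$ is either some $\widehat{\Box\psi}$ or $W_{\blacksquare}$. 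If $X = \widehat{\Box\psi}$, then $\Box\psi \in w$ and $\widehat{\Box\psi} \subseteq \widehat{\varphi}$ yields $\vdash \Box\psi \rightarrow \varphi$; applying derived \monot gives $\vdash \Box\Box\psi \rightarrow \Box\varphi$, and composing with $\four_{\Box}$ ($\vdash \Box\psi \rightarrow \Box\Box\psi$) gives $\vdash \Box\psi \rightarrow \Box\varphi$, whence $\Box\varphi \in w$. This is precisely the step that forces the presence of the transitivity axiom $\four_{\Box}$ in \gitlog.

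For $\gamma = \blacksquare\varphi$ I split on whether $w \in Y_2$ or $w \in Y_1$. If $w \in Y_2$, the forcing clause says $w \Vdash \blacksquare\varphi$ iff the truth set of $\varphi$ lies in $\mathcal{N}_w$; by the inductive hypothesis that truth set equals $\widehat{\varphi}$, so $w \Vdash \blacksquare\varphi$ iff $\widehat{\varphi} \in \mathcal{N}_w$. The direction $\blacksquare\varphi \in w \Rightarrow \widehat{\varphi} \in \mathcal{N}_w$ is immediate from $\mathcal{N}_w = \{\widehat{\psi}; \blacksquare\psi \in w\}$, and the converse is exactly the $\blacksquare$-extensionality lemma applied with $\psi := \varphi$. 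If $w \in Y_1$, the forcing clause gives $w \Vdash \blacksquare\varphi$ iff there is $X \in \tau_{iW}$ with $\mathbf{f}(w) \in X \subseteq \widehat{\varphi}$, i.e. exactly the condition characterising $\Box\varphi$ at the point $\mathbf{f}(w) = u$. Since $u \in \bigcup \tau_{iW} = W_{\blacksquare}$ satisfies $\Box\chi \in u \Leftrightarrow \blacksquare\chi \in w$ for all $\chi$, rerunning the $\Box$-argument of the previous paragraph at $u$ (using only the inductive hypothesis for $\varphi$) yields $w \Vdash \blacksquare\varphi$ iff $\Box\varphi \in u$ iff $\blacksquare\varphi \in w$.

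The main obstacle, and the step needing the most care, is the remaining subcase of the $\Box$ argument where the \emph{only} witness available is $X = W_{\blacksquare}$ rather than a basic set $\widehat{\Box\psi}$. Here the clean translation $\widehat{\Box\psi} \subseteq \widehat{\varphi} \Rightarrow \vdash \Box\psi \rightarrow \varphi$ is unavailable, since $W_{\blacksquare}$ is not of the form $\widehat{\Box(\cdot)}$; one must argue directly that $w \in W_{\blacksquare}$ together with $W_{\blacksquare} \subseteq \widehat{\varphi}$ still forces $\Box\varphi \in w$, exploiting the defining equivalence $\Box\chi \in w \Leftrightarrow \blacksquare\chi \in w$ valid on $W_{\blacksquare}$ (and hence the absence, in this logic, of a necessitation rule). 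This is the delicate point, and it is precisely where the interaction between the two modalities and the generalized (possibly $W$-free) character of $\tau_{iW}$ must be reconciled; a secondary bookkeeping point is that because $\Box\varphi$ and $\blacksquare\varphi$ are of equal complexity, the two modal cases should be proved by a single simultaneous induction at each complexity level rather than by citing one as a black box for the other.
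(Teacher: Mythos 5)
Your argument tracks the paper's own proof almost clause for clause: the same induction, the same use of $\tax_{\Box}$ and the witness $\widehat{\Box\varphi}$ in the right-to-left direction of the $\Box$ case, the same $\four_{\Box}$-plus-monotonicity translation in the left-to-right direction, and the same $Y_1$/$Y_2$ split (with the $\blacksquare$-extensionality lemma) for $\blacksquare$. In two respects you are actually more careful than the paper: you derive \monot syntactically from $\maxx_{\Box}$ and $\rext_{\Box}$ (the paper merely appeals to the rule being ``true in infra-topological models,'' which is not a licence to use it inside a derivation), and you notice that $\tau_{iW}$ contains, besides the sets $\widehat{\Box\psi}$, the additional open set $W_{\blacksquare}$ --- a case the paper's proof erases when it flatly asserts that the witness ``$X$ has the form $\widehat{\Box\psi}$.''

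The problem is that you never discharge that extra case: your final paragraph only announces that ``one must argue directly'' that $w \in W_{\blacksquare}$ and $W_{\blacksquare} \subseteq \widehat{\varphi}$ yield $\Box\varphi \in w$, and labels this the delicate point. Flagging an obstacle is not a proof, and here no direct argument exists, because the claim is false for the canonical model as the paper defines it. Since \gitlog has no necessitation rule, the set $\gitlog \cup \{\lnot\Box\psi : \psi \text{ a formula}\} \cup \{\lnot\blacksquare\psi : \psi \text{ a formula}\}$ is consistent: it is satisfied at the single world $a$ of the \gitop-model $W=\{a\}$, $\tau_{iW}=\{\emptyset\}$, $Y_1=\emptyset$, $Y_2=\{a\}$, $\mathcal{N}_a=\emptyset$, and soundness plus the fact that derivations from premises use only \mpon does the rest. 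By Lindenbaum it extends to a maximal theory $w_0$ containing no formula of the form $\Box\psi$ or $\blacksquare\psi$; such a $w_0$ lies in $W_{\blacksquare}$ vacuously. Now take $\varphi = p \rightarrow p$: every world forces it and every maximal theory contains it, so $W_{\blacksquare} \subseteq \widehat{\varphi}$, whence $w_0 \Vdash \Box\varphi$ with witness $X = W_{\blacksquare} \in \tau_{iW}$, although $\Box\varphi \notin w_0$. So the truth lemma itself fails at $w_0$ once $W_{\blacksquare}$ is declared open, and the equivalence $\Box\chi \in w \Leftrightarrow \blacksquare\chi \in w$ on $W_{\blacksquare}$ that you propose to exploit gives nothing, since $w_0$ contains no $\blacksquare$-formulas either. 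In short: the gap you located is real, it is present (silently) in the paper's own proof as well, and it cannot be closed by a cleverer argument at this step --- any repair must modify the canonical construction or strengthen \gitlog itself (note that $\Box p \rightarrow (\blacksquare q \rightarrow \Box q)$ is valid in all \gitop-models but does not appear to be derivable), not the bookkeeping of the induction.
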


\begin{proof}
The proof goes by the induction on the complexity of formulas. We do not deal here with Boolean cases which are standard. 

\begin{enumerate}
\item Assume that $\gamma = \Box \varphi$. 

$(\Leftarrow)$

Suppose that $\Box \varphi \in w$. Hence $w \in \widehat{\Box \varphi}$. This set is infra-open. By means of axiom $\tax$ we have $\widehat{\Box \varphi} \subseteq \widehat{\varphi}$. Hence, we conclude that there is $X = \widehat{\Box \varphi}$ such that for all $v \in X$, $v \in \widehat{\varphi}$, i.e. $\varphi \in v$. By induction hypothesis, $v \Vdash \varphi$. Thus $w \Vdash \Box \varphi$. 

$(\Rightarrow)$

Assume that $w \Vdash \Box \varphi$. Then there is $X \in \tau_{iW}$ such that $w \in X$ and for any $v \in X$, $v \Vdash \varphi$ (by induction hypothesis it means that $\varphi \in v$). $X$ has the form $\widehat{\Box \psi}$ for certain $\psi$. We see that $\widehat{\Box \psi} \subseteq \widehat{\varphi}$. Hence, $\Box \psi \rightarrow \varphi$ is a theorem of \gitlog (if not, then there would exist maximal consistent set containing both $\Box \varphi$ and $\lnot \varphi$, this would mean that $\widehat{\Box \psi} \nsubseteq \widehat{\varphi}$). By the rule of monotonicity, which is true in infra-topological models (with respect to $\Box$ operator), we infer $\Box \Box \psi \rightarrow \Box \varphi$. From axiom \four we have $\Box \psi \rightarrow \Box \varphi$. Then $\widehat{\Box \psi} \subseteq \widehat{\Box \varphi}$. Thus $\Box \varphi \in w$. 

\item Assume that $\gamma = \blacksquare \varphi$. 

$(\Leftarrow)$

Suppose that $\blacksquare \varphi \in w$. We have two options:

\begin{enumerate}

\item $w \in Y_1$. Hence $\Box \varphi \in u = \mathbf{f}(w)$. Thus $u \in \widehat{\Box \varphi}$. This set belongs to $\tau_{iW}$. By the axiom $\tax_{\Box}$ we state that $\widehat{\Box \varphi} \subseteq \widehat{\varphi}$. Now there is $X = \widehat{\Box \varphi}$ such that $X \in \tau_{iW}$, $u \in X$ and for all $v \in X$ we have $\varphi \in v$ which means (by induction hypothesis) that $v \Vdash \varphi$. Thus $w \Vdash \blacksquare \varphi$. 

\item $w \in Y_2$. By the definition of $\mathcal{N}$ we state that $\{z \in W; \varphi \in z\} = \widehat{\varphi} \in \mathcal{N}_{w}$. By induction hypothesis $\widehat{\varphi} = \{z \in W; z \Vdash \varphi\} \in \mathcal{N}_{w}$. Hence $w \Vdash \blacksquare \varphi$. 

\end{enumerate}

$(\Rightarrow)$

Suppose that $w \Vdash \blacksquare \varphi$. Again we have two possibilities:

\begin{enumerate}

\item $w \in Y_1$. There is $X \in \tau_{iW}$ such that $u = \mathbf{f}(w) \in X$ and for any $v \in X$, $v \Vdash \varphi$. $X$ can be written as $\widehat{\Box \psi}$ (for certain formula $\psi$). As earlier, we prove that $u \in \widehat{\Box \psi} \subseteq \widehat{\Box \varphi}$. Hence, $\Box \varphi \in u$. But $u = \mathbf{f}(w)$. Then $\blacksquare \varphi \in w$. 

\item $w \in Y_2$. Let us assume that $w \Vdash \blacksquare \varphi$. Hence $\{z \in W; z \Vdash \varphi\} \in \mathcal{N}_{w}$. By induction hypothesis $\{z \in W; \varphi \in z\} \in \mathcal{N}_{w}$. It means that $\blacksquare \varphi \in w$. 

\end{enumerate}
\end{enumerate}

\end{proof}

In conclusion, we have the following result:

\begin{theorem}
\gitlog is (strongly) complete with respect to the class of all \gitop-models.
\end{theorem}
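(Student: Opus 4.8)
The plan is to obtain strong completeness as the contrapositive of the semantic consequence relation, assembling the canonical \gitop-model with the Truth Lemma already proved. Concretely, strong completeness amounts to the statement that every \gitlog-consistent set of formulas is satisfiable in some world of some \gitop-model; equivalently, whenever a set $w$ of formulas satisfies $w \nvdash \varphi$, there are a \gitop-model and a world forcing every formula of $w$ yet not forcing $\varphi$. So I would start from an arbitrary $w$ with $w \nvdash \varphi$ and aim to produce such a witnessing model and world.

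First I would note that $w \cup \{\lnot \varphi\}$ is \gitlog-consistent. This uses that syntactic consequence is finitary: by the definition of $\vdash$, a derivation draws on only finitely many premisses together with the axiom schemes and \mpon, so if $w \cup \{\lnot\varphi\}$ were inconsistent, a finite subset would already yield $w \vdash \varphi$, contradicting the hypothesis. Then, by the Lindenbaum/Henkin extension invoked earlier, I would enlarge $w \cup \{\lnot\varphi\}$ to a maximal consistent theory $u$. By construction $u$ is one of the worlds of the canonical infra-model, which by the preceding lemma is a genuine \gitop-model; hence the full apparatus (the generalized infra-topology $\tau_{iW}$, the splitting $W = Y_1 \cup Y_2$, the linking function $\mathbf{f}$, and the neighbourhood assignment $\mathcal{N}$) is at our disposal.

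Next I would invoke the Truth Lemma: in the canonical model, $u \Vdash \gamma \Leftrightarrow \gamma \in u$ for every formula $\gamma$. Since $w \subseteq u$, every member of $w$ is forced at $u$, while $\lnot\varphi \in u$ gives $u \nVdash \varphi$. This exhibits a \gitop-model together with a world satisfying all of $w$ but refuting $\varphi$, which is precisely the failure of $w \models \varphi$; by contraposition $w \models \varphi$ implies $w \vdash \varphi$, as required for strong completeness.

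The main obstacle has in fact already been discharged: it resides in the Truth Lemma and in the verification that the canonical infra-model really is a \gitop-model (in particular that $\tau_{iW}$ is closed under finite intersections, argued via $\cax_{\Box}$ and $\maxx_{\Box}$, and that $\bigcup \tau_{iW} = W_{\blacksquare} \subseteq Y_1$). The remaining assembly is routine; the one point that deserves explicit care is that we obtain \emph{strong} rather than merely weak completeness, which is secured exactly by the finitary character of $\vdash$ and by the absence of any rule such as \nec that would block the passage from $w \nvdash \varphi$ to consistency of $w \cup \{\lnot\varphi\}$.
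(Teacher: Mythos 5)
Your proposal is correct and follows essentially the same route as the paper: both arguments extend the given theory to a maximal consistent theory avoiding $\varphi$ (you via explicit consistency of $w \cup \{\lnot\varphi\}$ and Lindenbaum, the paper by directly extending $w$ to a maximal $v$ with $\varphi \notin v$), and both then read off the countermodel from the canonical \gitop-model via the Truth Lemma. Your version merely makes explicit the finitariness/deduction-theorem step that the paper leaves implicit.
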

\begin{proof}
Suppose that $w$ is a consistent theory of \gitlog and $w \nvdash \varphi$. In particular $\varphi \notin w$. We extend $w$ to the maximal theory $v$ such that $w \subseteq v$ and $\varphi \notin v$. Then for each $\psi \in w$, $v \Vdash \psi$ and $v \nVdash \varphi$. But $w \subseteq v$, hence $\varphi$ is not a semantical consequence of $w$, i.e. $w \nVdash \varphi$. 
\end{proof}

Assume now that our model has been simplified: there is no $Y_1$ and $Y_2$ (but there is still generalized infra-topology $\tau_{iW}$), function $\mathbf{f}$ is absent and we have only one modal operator $\Box$ (defined as earlier). Now we can easily prove that logic \cmtf is complete with respect to this class. Moreover, we can assume that $W$ is infra-open: in this case rule of necessity (or, equivalently, axiom \nax) becomes true, hence our structures correspond to the well-known \sfour logic. However, it is well-known fact that \sfour is complete with respect to the narrower class of models, namely topological models (i.e. with closure under arbitrary unions). 

\section{Final remarks and ruther research}

We think that it would be valuable to investigate the notions of connectedness and density in the context of ps-infra-open and i-genuine sets. As for the connectedness, some attempts were made in \cite{DASG} (but in the setting of minimal structures). As for the density, the following definition seems to be sensible at first glance: $A$ is ps-dense (resp. i-dense) in $X$ \emph{iff} it has non-empty interesection with any non-empty ps-infra-open (resp. i-genuine) subset of $X$. However, in case of i-genuine sets this definition is not valuable: because we know that each singleton is i-genuine, so our dense set would have to have non-empty intersection with \emph{any} singleton. There is only one such set, namely $W$. However, me may demand an intersection with any strictly i-genuine set.

From the logical point of view, it would be interesting to connect these two classes (ps-infra-open and i-genuine sets) with forcing of modalities. Of course, we can easily define new necessity operator, say $\bullet$, in the following manner: $w \Vdash \bullet \varphi \Leftrightarrow$ there is ps-infra-open (resp. i-genuine) set $X$ such that $w \in X$ and for any $v \in X, v \Vdash \varphi$. However, it is not so obvious how to capture this notions in canonical model (clearly, our aim is to establish completeness). Also, one could think about logical characterization of particular infra-spaces, like those from Ex. \ref{jeden} (6) - (8) and (10).

\end{document}